\theoremstyle{plain}
\newtheorem{theorem}{Theorem}
\newtheorem*{maintheorem}{Main Theorem}
\newtheorem{lem}{Lemma}
\def\N{\mathbb{N}}
\def\irA{\mathcal{A}}
\def\im{\mathrm{Im}\,}
\def\tr{\mathrm{Tr}\,}
\begin{document}

\title{Wigner's theorem on Grassmann spaces}

\author{Gy\"orgy P\'al Geh\'er}
\address{Department of Mathematics, University of Reading, Reading RG6 6AX, United Kingdom}
\email{gehergyuri@gmail.com or G.P.Geher@reading.ac.uk}

\begin{abstract}
Wigner's celebrated theorem, which is particularly important in the mathematical foundations of quantum mechanics, states that every bijective transformation on the set of all rank-one projections of a complex Hilbert space which preserves the transition probability is induced by a unitary or an antiunitary operator. 
This vital theorem has been generalised in various ways by several scientists. 
In 2001, Moln\'ar provided a natural generalisation, namely, he provided a characterisation of (not necessarily bijective) maps which act on the Grassmann space of all rank-$n$ projections and leave the system of Jordan principal angles invariant (see \cite{MolnarWigner} and \cite{MolnarWigner2}). 
In this paper we give a very natural joint generalisation of Wigner's and Moln\'ar's theorems, namely, we prove a characterisation of all (not necessarily bijective) transformations on the Grassmann space which fix the quantity $\tr PQ$ (i.e.~the sum of the squares of cosines of principal angles) for every pair of rank-$n$ projections $P$ and $Q$.
\end{abstract}

\maketitle

\section{Introduction and statement of the main result}

Let $H$ be a complex Hilbert space and $I$ stand for the identity operator.
If $n$ is a positive integer, then we denote the set of all rank-$n$ (self-adjoint) projections by $P_n(H)$.
This space can be naturally identified with the \emph{Grassmann space} of all $n$-dimensional subspaces of $H$ using the map $P\mapsto \im P$.
In case when $n=1$, we get the usual projective space that represents the set of all pure states of a quantum system.
For $P,Q \in P_n(H)$ let us call the quantity $\tr PQ$ the \emph{transition probability} between the two projections.
If $n=1$, then this is a commonly used notion in quantum mechanics, furthermore, $\tr PQ = \cos^2 \vartheta$ where $\vartheta$ is the angle between $\im P$ and $\im Q$.
Wigner's theorem characterises symmetry transformations of $P_1(H)$ that respect the transition probability, or equivalently, that leave the angle invariant.
However, this theorem can be significantly improved, namely, we can drop the bijectivity assumption and have a similar conclusion.

\begin{theorem}[E.P. Wigner, see \cite{Wigner}, or \cite{Geher, MolnarWigner, Uhlhorn}]\label{W}
	Let $\phi\colon P_1(H) \to P_1(H)$ be a (not necessarily bijective) transformation which satisfies
		$$
		\tr \phi(P)\phi(Q) = \tr PQ \qquad (P,Q \in P_1(H)).
		$$
	Then $\phi$ is induced by either a linear or a conjugatelinear isometry $V\colon H\to H$, i.e.
		$$
		\phi(P) = VPV^* \qquad (P \in P_1(H)).
		$$
\end{theorem}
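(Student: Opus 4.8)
The plan is to pass from projections to unit vectors. For a unit vector $x$ write $P_x$ for the rank-one projection onto $\C x$, so that $\tr P_x P_y = |\langle x,y\rangle|^2$; the hypothesis then says that $\phi$ preserves every transition probability. Since $\tr P_x P_y = 1$ precisely when $x$ and $y$ are parallel, $\phi$ is injective, and since $\tr P_x P_y = 0$ precisely when $x\perp y$, the map $\phi$ preserves orthogonality in both directions. Moreover $\phi$ is norm-continuous: if $P_m\to P$ then $\tr \phi(P_m)\phi(Q)=\tr P_m Q\to\tr PQ=\tr\phi(P)\phi(Q)$ for every $Q$, and for rank-one projections this forces $\phi(P_m)\to\phi(P)$. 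The cases $\dim H\le 1$ are trivial, so I would assume $\dim H\ge 2$ from now on.

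Next I would fix an orthonormal basis $(e_\gamma)_{\gamma\in\Gamma}$ of $H$ and pick unit vectors $f_\gamma$ with $\phi(P_{e_\gamma})=P_{f_\gamma}$; by the previous paragraph these are pairwise orthogonal. For an arbitrary unit vector $x$, writing $\phi(P_x)=P_{x'}$, the hypothesis gives $|\langle x',f_\gamma\rangle|=|\langle x,e_\gamma\rangle|$ for all $\gamma$, and since $\sum_\gamma|\langle x,e_\gamma\rangle|^2=1=\|x'\|^2$, Parseval forces $x'$ into the closed linear span $H_0$ of $\{f_\gamma\}$ and shows that $(f_\gamma)$ is an orthonormal basis of $H_0$. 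As $\dim H_0=\dim H$, I may replace $H$ by $H_0$ and assume $(f_\gamma)$ is an orthonormal basis of $H$. What remains is to remove the phase ambiguity in the $f_\gamma$: single out an index $0\in\Gamma$; applying the hypothesis to $\tfrac1{\sqrt2}(e_0+e_\gamma)$ for $\gamma\neq 0$ shows that $\phi\bigl(P_{(e_0+e_\gamma)/\sqrt2}\bigr)=P_{(f_0+\lambda_\gamma f_\gamma)/\sqrt2}$ for some unimodular $\lambda_\gamma$, and after replacing each $f_\gamma$ by $\lambda_\gamma f_\gamma$ (which does not change $P_{f_\gamma}$) I may assume $\lambda_\gamma=1$. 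Let $V$ be the linear isometry of $H$ with $Ve_\gamma=f_\gamma$, and let $x\mapsto\overline x$ denote coordinatewise complex conjugation with respect to the basis $(e_\gamma)$.

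The remaining, and main, step is to show that $\phi(P_x)=P_{Vx}$ for every $x$, or $\phi(P_x)=P_{V\overline x}$ for every $x$. Applying the hypothesis to $\tfrac1{\sqrt2}(e_0+ie_\gamma)$ shows that its image is $P_{(f_0+\mu_\gamma f_\gamma)/\sqrt2}$ with $\mu_\gamma+\overline{\mu_\gamma}=0$, i.e.\ $\mu_\gamma\in\{i,-i\}$. Evaluating the transition probabilities of a few further two- and three-term combinations of the $e_\gamma$ against the already normalised ones then pins everything down: the combinations $\tfrac1{\sqrt3}(e_0+e_\gamma+e_\delta)$ and $\tfrac1{\sqrt3}(e_0+e_\gamma+ie_\delta)$ force $\phi\bigl(P_{(e_\gamma+e_\delta)/\sqrt2}\bigr)=P_{(f_\gamma+f_\delta)/\sqrt2}$ and $\phi\bigl(P_{(e_\gamma+ie_\delta)/\sqrt2}\bigr)=P_{(f_\gamma+\mu_\delta f_\delta)/\sqrt2}$, and the decisive comparison of $\tfrac1{\sqrt3}(e_0+ie_\gamma+ie_\delta)$ with $\tfrac1{\sqrt2}(e_\gamma+e_\delta)$ yields $|\mu_\gamma+\mu_\delta|^2=4$, hence $\mu_\gamma=\mu_\delta$. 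Thus all the $\mu_\gamma$ equal a common value $\varepsilon\in\{i,-i\}$, and the same type of argument (or an induction on the number of nonzero coordinates) shows that for every finite combination $x$ of the $e_\gamma$ with rational real and imaginary parts one has $\phi(P_x)=P_{Vx}$ if $\varepsilon=i$ and $\phi(P_x)=P_{V\overline x}$ if $\varepsilon=-i$. Since such $x$ are dense in the unit sphere of $H$ and $\phi$ is continuous, the corresponding identity $\phi(P)=VPV^*$ (respectively with the conjugate-linear isometry $x\mapsto V\overline x$) holds on all of $P_1(H)$; $V$ is an isometry because it carries an orthonormal basis to an orthonormal system.

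The main obstacle is exactly this last step: choosing all the phases of the $f_\gamma$ coherently and establishing the linear/conjugate-linear dichotomy — that is, ruling out that $\phi$ acts linearly on some lines and conjugate-linearly on others, or exhibits genuinely semilinear behaviour stemming from a wild automorphism of $\C$. In finite dimensions one can largely bypass this: a continuous injection of the compact connected manifold $P_1(H)$ into itself is automatically surjective, so $\phi$ is bijective and the classical bijective form of Wigner's theorem applies. In the infinite-dimensional case the phase bookkeeping must be carried out by hand, and one must verify that the constraints imposed by the various test vectors are mutually consistent.
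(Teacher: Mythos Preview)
The paper does not give its own proof of this theorem: it is stated as a classical result with references to \cite{Wigner,Geher,MolnarWigner,Uhlhorn}, and is then used as a black box at the very end of the proof of the Main Theorem. So there is no ``paper's proof'' to compare against; the relevant comparison is with the cited sources, especially \cite{Geher}, which gives exactly the kind of elementary argument you outline.

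Your sketch is the classical Bargmann-type route --- lift to unit vectors, transport an orthonormal basis, normalise phases via two- and three-term test vectors, and extend by density --- and your concrete computations (that $\mu_\gamma\in\{i,-i\}$, and that comparing $(e_0+ie_\gamma+ie_\delta)/\sqrt3$ with $(e_\gamma+e_\delta)/\sqrt2$ forces $|\mu_\gamma+\mu_\delta|^2=4$, hence $\mu_\gamma=\mu_\delta$) are correct. Two small points. First, your continuity argument is phrased imprecisely: the identity $\tr\phi(P_m)\phi(Q)=\tr P_mQ$ holds only for $Q$ in the \emph{domain} of $\phi$, not for arbitrary rank-one $Q$; the clean fix is simply to take $Q=P$, which gives $\|\phi(P_m)-\phi(P)\|_{HS}^2=2-2\tr P_mP\to0$. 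Second, the line ``the same type of argument (or an induction on the number of nonzero coordinates) shows \dots'' is where the genuine bookkeeping lives: for a general unit vector $x=\sum_\gamma c_\gamma e_\gamma$ one must show that the coefficients of the image vector are \emph{simultaneously} all the $c_\gamma$ or all the $\overline{c_\gamma}$, and this is done by testing against the already-normalised vectors $(e_0+e_\gamma)/\sqrt2$ and $(e_0+ie_\gamma)/\sqrt2$ for each $\gamma$. This is routine but not entirely trivial, and is carried out in detail in \cite{Geher}; your honest flagging of it as the main obstacle is appropriate.
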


The above result is commonly referred to as the \emph{optimal version of Wigner's theorem}.
Various generalisations of this essential result have been provided, we only mention a few of them \cite{BotelhoJamisonMolnar,CMPregi,Chevalier,GSUhlhorn,GyoryUhlhorn,Molnarindef,MolnarWigner2,Molnarindef2,MolnarWigner,Semrlrankone,Semrlquaternion,Semrlalgebra,SemrlUhlhorn}.
This short note is particularly concerned with \emph{Moln\'ar's generalisation} which we explain now.
Assume that $n>1$ and $P, Q \in P_n(H)$, then the \emph{principal angles} between $P$ and $Q$ are the arcuscosines of the $n$ largest singularvalues of $PQ$ (\cite[Exercise VII.1.10]{Bhatia}, \cite[Problem 559]{Kirillov}).
The system of all principal angles is denoted by $\measuredangle(P,Q) := (\vartheta_1,\dots \vartheta_n)$ where $\tfrac{\pi}{2}\geq\vartheta_1\geq \vartheta_2\geq \dots \geq \vartheta_n \geq 0$.
The origin of the notion goes back to Jordan's work \cite{Jordan} and has serious applications, see e.g.~\cite{ARMA, robust, Hotelling, applbook, Li}.
Moln\'ar proved the following.

\begin{theorem}[L. Moln\'ar, \cite{MolnarWigner2,MolnarWigner}]\label{MW}
	Let $\dim H > n \geq 2$ and $\phi\colon P_n(H) \to P_n(H)$ be a (not necessarily bijective) transformation that satisfies
	\begin{equation}\label{princang}
		\measuredangle(\phi(P),\phi(Q)) = \measuredangle(P,Q) \qquad (P,Q \in P_n(H)).
	\end{equation}
	Then either $\phi$ is induced by a linear or a conjugatelinear isometry $V\colon H\to H$, i.e.
	$$
	\phi(P) = VPV^* \qquad (P \in P_n(H)),
	$$
	or we have $\dim H = 2n$ and 
	$$
	\phi(P) = I-VPV^* \qquad (P \in P_n(H)).
	$$
\end{theorem}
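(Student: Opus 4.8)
My plan is to combine the geometry of the Grassmann graph with the optimal Wigner theorem (Theorem~\ref{W}), handling the extreme dimension $\dim H=n+1$ separately.

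\textbf{Step 1: elementary consequences.} Since the number of zero entries in $\measuredangle(P,Q)=(\vartheta_1,\dots,\vartheta_n)$ equals $\dim(\im P\cap\im Q)$, hypothesis \eqref{princang} yields
\[
\dim\bigl(\im\phi(P)\cap\im\phi(Q)\bigr)=\dim\bigl(\im P\cap\im Q\bigr)\qquad(P,Q\in P_n(H)).
\]
Reading this with the value $n$ shows that $\phi$ is injective; with the value $n-1$ it shows that $\phi$ preserves, in both directions, the \emph{adjacency} relation $P\sim Q\iff\dim(\im P\cap\im Q)=n-1$ of the Grassmann graph; and since $\tr PQ=0\iff\measuredangle(P,Q)=(\tfrac\pi2,\dots,\tfrac\pi2)$, the map $\phi$ preserves orthogonality in both directions as well. (It also preserves the Hilbert--Schmidt distance, as $\|P-Q\|_2^2=2n-2\tr PQ$.)

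\textbf{Step 2: the boundary case $\dim H=n+1$.} Now every two distinct members of $P_n(H)$ are adjacent, so Step 1 gives no combinatorial information; instead I would pass to orthocomplements. The bijection $P\mapsto I-P$ identifies $P_n(H)$ with $P_1(H)$ and satisfies $\tr(I-P)(I-Q)=\tr PQ+(\dim H-2n)$, and in this dimension $\measuredangle(P,Q)$ is itself determined by $\tr PQ$; hence $R\mapsto I-\phi(I-R)$ is a transition-probability preserving self-map of $P_1(H)$. By Theorem~\ref{W} it has the form $R\mapsto VRV^*$ for a linear or conjugatelinear isometry $V$, necessarily unitary or antiunitary since $\dim H<\infty$, and unwinding the identification gives $\phi(P)=VPV^*$.

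\textbf{Step 3: the case $\dim H\geq n+2$.} Assume first that $\dim H$ is finite. Then $\phi$ is an injective self-map of the connected Grassmann graph on the $n$-dimensional subspaces of $H$ preserving adjacency in both directions, so a version of Chow's fundamental theorem of geometry that does not require surjectivity applies: $\phi$ is either \emph{degenerate} (its range lies inside a single pencil, or inside the dual of one) or it is induced by a semilinear bijection $A$ of $H$, possibly composed with the duality $L\mapsto L^\perp$. A degenerate $\phi$ would force $\dim(\im\phi(P)\cap\im\phi(Q))$ to stay away from the extreme values it must attain by Step 1, a contradiction; and $L\mapsto L^\perp$ maps $P_n(H)$ into $P_n(H)$ only when $\dim H=2n$. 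Thus $\im\phi(P)=A(\im P)$ for every $P$, or else $\dim H=2n$ and $\im\phi(P)=\bigl(A(\im P)\bigr)^\perp$ for every $P$. To identify $A$, fix an $(n-1)$-dimensional subspace $L$ and restrict $\phi$ to $\{P\in P_n(H):\im P\supseteq L\}\cong P_1(L^\perp)$: on this set the angle system reduces to a single transition probability, $\phi$ carries it into a set of the same shape, and Theorem~\ref{W} forces the field automorphism of $A$ to be the identity or complex conjugation and $A$ to be a scalar multiple of an isometry on $L^\perp$. Since $\dim H>n$, any two vectors of $H$ lie in a common $L^\perp$, so these local descriptions agree and $A=\mu V$ for a single linear or conjugatelinear isometry $V\colon H\to H$. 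As subspaces are insensitive to the scalar $\mu$, we get $\im\phi(P)=V(\im P)$, i.e.\ $\phi(P)=VPV^*$; in the dual case $\dim H=2n$ is finite, $V$ is unitary or antiunitary, and $\im\phi(P)=\bigl(V(\im P)\bigr)^\perp=V\bigl((\im P)^\perp\bigr)$, i.e.\ $\phi(P)=I-VPV^*$. Finally, the case $\dim H=\infty$ is reduced to the finite-dimensional one by applying the above to a cofinal family of finite-dimensional subspaces and gluing the resulting isometries, once one has checked that $\phi$ maps the rank-$n$ projections supported on a finite-dimensional subspace into those supported on another one.

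\textbf{Expected main obstacle.} The heart of the matter is Step 3. One needs a fundamental theorem of geometry valid for \emph{injective, not necessarily surjective} maps; one must exclude the degenerate adjacency preservers using only the numerical angle data; one has to pinpoint exactly when orthocomplementation becomes a competing solution (precisely $\dim H=2n$); and in the infinite-dimensional case the delicate point is the ``localization'' of $\phi$---that the images of all rank-$n$ projections living in a given finite-dimensional subspace again lie in a finite-dimensional subspace---which must be established before the direct-limit argument can be run.
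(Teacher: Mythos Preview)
Your plan follows Moln\'ar's own route rather than the one taken here. In this paper Theorem~\ref{MW} is obtained as the special case of the Main Theorem, whose hypothesis is merely $\tr\phi(P)\phi(Q)=\tr PQ$. Under that weaker assumption adjacency is \emph{not} directly readable from the data, and the paper's central device is Lemma~\ref{manifold}: the topological characterisation ``$P\overset{\not\perp a}{\sim}Q$ iff $\irA^{(n)}_{P,Q}$ is a one-dimensional manifold'', which is transported by the homeomorphism coming from Moln\'ar's linear extension $\Phi$ (Lemma~\ref{specebb}) and then upgraded to full adjacency by a closure argument. You, by contrast, exploit the entire angle tuple to read off $\dim(\im P\cap\im Q)$ immediately, getting adjacency for free --- a genuine shortcut available for Theorem~\ref{MW} but not for the Main Theorem. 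This is exactly the point the introduction makes when it says Moln\'ar's original proof eventually ``relies heavily on \eqref{princang}''.

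Two simplifications in your Step~3. First, you do not need a non-surjective Chow theorem or a degenerate-case analysis: in finite dimensions your $\phi$ is a Hilbert--Schmidt isometry of the compact connected manifold $P_n(H)$ into itself, hence (by invariance of domain) a homeomorphism, and the ordinary Chow theorem applies --- this is precisely how the paper proceeds in the $2n$-dimensional subsection. Second, identifying $A$ is quicker than your local-Wigner-on-pencils argument: you already know from Step~1 that $\phi$, and hence $A$, preserves orthogonality, and a semilinear bijection of a Hilbert space with that property is a scalar multiple of a unitary or antiunitary; the paper closes its $2n$-case with exactly this one line.

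The infinite-dimensional localisation you flag as the main obstacle is real, and the paper avoids it altogether by a different reduction: instead of restricting to finite-dimensional $M$ and gluing, it uses the already-proved $2n$-dimensional case on each orthogonal pair $P\perp Q$ to show that the linear extension $\Phi$ sends rank-one projections to rank-one projections, and then applies Theorem~\ref{W} once, globally. Your direct-limit scheme can be completed, but the cleanest way to supply the missing step --- that $\{\phi(P):\im P\subseteq M\}$ lives in a fixed finite-dimensional subspace --- is again via the linear extension $\Phi$ of Lemma~\ref{specebb}, which you do not invoke.
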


As it was revealed in a personal conversation, Moln\'ar's original desire was to prove a more general result.
Namely, note that by the two projections theorem (\cite{BS,Gal,GSUhlhorn}) we have $\tr PQ = \sum_{j=1}^n \cos^2\vartheta_j$, therefore if $\phi$ satisfies \eqref{princang}, then it automatically preservers the transition probability as well (see \eqref{trpres} below).
Actually, in the first few steps of the proof of Theorem \ref{MW} Moln\'ar used only this weaker property, although, there is a point where the methods start to heavily rely on \eqref{princang}.

The aim of the present paper is to provide this missing result which is stated below, and hence giving a very natural joint generalisation of the Wigner and Moln\'ar theorems.

\begin{maintheorem}\label{main}
	Let $\dim H > n \geq 2$ and $\varphi\colon P_n(H) \to P_n(H)$ be a (not necessarily bijective) map which preserves the transition probability, that is
	\begin{equation}\label{trpres}
		\tr \varphi(P)\varphi(Q) = \tr PQ \qquad (P,Q \in P_n(H)).
	\end{equation}
	Then either $\varphi$ is induced by a linear or conjugatelinear isometry $V\colon H\to H$, i.e.
	\begin{equation}\label{regular}
		\varphi(P) = VPV^* \qquad (P \in P_n(H)),
	\end{equation}
	or we have $\dim H = 2n$ and 
	\begin{equation}\label{perpregular}
		\varphi(P) = I-VPV^* \qquad (P \in P_n(H)).
	\end{equation}
\end{maintheorem}

We point out that all the above three theorems hold for real Hilbert spaces as well and their proofs are almost the same, even simpler, as in the complex case.
We present the proof of the Main Theorem in the next section.

Let us note that \eqref{trpres} is equivalent to the following property
	\begin{equation}\label{HSpres}
	\| \varphi(P) - \varphi(Q) \|_{HS} = \| P - Q \|_{HS} \qquad (P,Q \in P_n(H)),
	\end{equation}
where $\|\cdot\|_{HS}$ denotes the Hilbert-Schmidt norm.
Therefore our result describes the general form of not necessarily surjective \emph{isometries of the Grassmannian} with respect to this special norm. 
We mention that recently two papers \cite{BotelhoJamisonMolnar,GSUhlhorn} have been published about the same problem for the case of the operator norm.
However, the characterisation of non-bijective isometries of $P_n(H)$ with respect to the operator norm is still an open problem in the case when $\dim H = \infty$.
We hope that our proof gives some additional insight into that problem as well.


\section{Proof of Main Theorem}

Let $F_s(H)$ be the set of all finite-rank self-adjoint operators on $H$.
We begin with stating a lemma which is a trivial consequence of \cite[Lemma 2.1.5]{Molnarbook} and \cite[Lemma 1]{MolnarWigner}, and which was crucial in \cite{MolnarWigner}, as well as here.

\begin{lem}[L. Moln\'ar]\label{specebb}
	If $\varphi$ satisfies the conditions of Main Theorem, then it has a unique real-linear extension $\Phi\colon F_s(H) \to F_s(H)$ which is injective and satisfies
	\begin{equation}\label{extended_trpres}
		\tr \Phi(A)\Phi(B) = \tr AB \quad (A,B \in F_s(H)).
	\end{equation}
\end{lem}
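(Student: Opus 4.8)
The plan is to obtain $\Phi$ as the real-linear extension of $\varphi$ from $P_n(H)$ to its real span, so the first thing to pin down is the identity $\spanR P_n(H) = F_s(H)$. Since every finite-rank self-adjoint operator is a finite real-linear combination of rank-one projections by the spectral theorem, it suffices to place each rank-one projection into $\spanR P_n(H)$, and by unitary conjugation (under which $P_n(H)$, hence $\spanR P_n(H)$, is invariant) it suffices to do this for one of them. Using $\dim H > n$, I would fix orthonormal vectors $e_0,\dots,e_n$; for $1\le i\le n-1$ the operator $e_0e_0^*-e_ie_i^*$ is the difference of the two rank-$n$ projections obtained by adjoining $e_0e_0^*$, respectively $e_ie_i^*$, to the rank-$(n-1)$ projection onto $\mathrm{span}\{e_j:1\le j\le n,\ j\ne i\}$, and hence lies in $\spanR P_n(H)$. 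Summing these differences for $i=1,\dots,n-1$ and adding the rank-$n$ projection onto $\mathrm{span}\{e_0,\dots,e_{n-1}\}$ yields $n\,e_0e_0^*$, so $e_0e_0^*\in\spanR P_n(H)$. (This is essentially the content of the cited lemmas, and it is the only place $\dim H>n$ enters.)

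Granting the spanning identity, uniqueness of a real-linear extension is immediate, so only existence must be verified. For $A\in F_s(H)$ choose a representation $A=\sum_i c_iP_i$ with $c_i\in\R$ and $P_i\in P_n(H)$, and set $\Phi(A):=\sum_i c_i\varphi(P_i)\in F_s(H)$. Well-definedness is precisely where hypothesis \eqref{trpres} is used: if $\sum_i c_iP_i=\sum_j d_jQ_j$, then $\sum_i c_i\tr P_iR=\sum_j d_j\tr Q_jR$ for every $R\in P_n(H)$, and applying \eqref{trpres} to each term rewrites this as $\tr\big[X\varphi(R)\big]=0$ for all $R\in P_n(H)$, where $X:=\sum_i c_i\varphi(P_i)-\sum_j d_j\varphi(Q_j)$. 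Now $X$ lies in the real span of $\{\varphi(R):R\in P_n(H)\}$ and is Hilbert–Schmidt orthogonal to every $\varphi(R)$, hence to itself; since the Hilbert–Schmidt inner product $(X,Y)\mapsto\tr XY$ is positive definite on $F_s(H)$, we get $X=0$. Thus $\Phi$ is a well-defined map, real-linear by construction, and it restricts to $\varphi$ on $P_n(H)$.

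The remaining two assertions are then formal. For $A=\sum_i c_iP_i$ and $B=\sum_j d_jQ_j$, bilinearity of the trace together with \eqref{trpres} gives $\tr\Phi(A)\Phi(B)=\sum_{i,j}c_id_j\tr\varphi(P_i)\varphi(Q_j)=\sum_{i,j}c_id_j\tr P_iQ_j=\tr AB$, which is \eqref{extended_trpres}; and if $\Phi(A)=0$ then $\tr A^2=\tr\Phi(A)\Phi(A)=0$, so $A=0$ by positive definiteness of the Hilbert–Schmidt form, i.e.\ $\Phi$ is injective.

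I do not anticipate a genuine obstacle, since the statement is deliberately a repackaging of Moln\'ar's lemmas; the one point that rewards a little care is the well-definedness of $\Phi$, where the key observation is that surjectivity of $\varphi$ is never needed — it is enough that $X$ belongs to the real span of the range of $\varphi$, on which the (always positive definite) Hilbert–Schmidt form separates points. Everything else reduces to the spectral theorem and bilinearity of the trace.
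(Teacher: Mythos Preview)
Your proof is correct and is precisely an unpacking of the two Moln\'ar lemmas the paper cites in lieu of a proof: the spanning identity $\spanR P_n(H)=F_s(H)$ via writing a rank-one projection as a combination of $n+1$ rank-$n$ projections (your construction indeed uses exactly $n+1$ distinct projections, since the ``second'' projection in each difference is always the same), and the well-definedness/isometry argument via the Hilbert--Schmidt inner product. There is nothing to add.
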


An immediate consequence of Lemma \ref{specebb} is that if $\dim H < \infty$, then $\Phi$ is a homeomorphism, moreover, by the domain invariance theorem $\varphi$ is a homeomorphism as well.
We call two rank-$n$ projections $P$ and $Q$ \emph{adjacent} if $\dim(\im P \cap \im Q) = n-1$, or equivalently, if $\mathrm{rank}(P-Q) = 2$, and in this case we use the notation $P\overset{a}{\sim} Q$. 
Note that $P\overset{a}{\sim} Q$ implies $P\neq Q$.
It is apparent by the two projections theorem that $P\overset{a}\sim Q$ if and only if $\measuredangle(P,Q)$ contains exactly one non-zero angle.

From now on, we will distinguish two different cases.

\subsection{The $2n$-dimensional case}

Here we will utilise the following special case of Chow's fundamental theorem of geometry of Grassmann spaces.

\begin{theorem}[W.-L. Chow, see \cite{Chow}, or \cite{GSUhlhorn}]\label{Chow}
	Let $\dim H = 2n$ and $\phi\colon P_n(H)\to P_n(H)$ be a continuous bijection which preserves adjacency in both directions, i.e.
	$$
		\phi(P) \overset{a}{\sim} \phi(Q) \iff P \overset{a}{\sim} Q \quad (P,Q\in P_n(H)).
	$$
	Then there exists a linear or conjugatelinear bijection $A\colon H\to H$ such that either
	\begin{equation}\label{regularChow}
		\im \phi(P) = A (\im P) \quad (P\in P_n(H)),
	\end{equation}
	or 
	\begin{equation}\label{perpregularChow}
		\im \phi(P) = \left(A (\im P)\right)^\perp \quad (P\in P_n(H)).
	\end{equation}
\end{theorem}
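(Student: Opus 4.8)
The plan is to recover a semilinear structure on $H$ purely from the combinatorics of the adjacency relation, and to use the continuity hypothesis only at the very end, to pin down the accompanying automorphism of $\C$. Since $\dim H=2n<\infty$, the map $P\mapsto\im P$ is a homeomorphism of $P_n(H)$ onto the Grassmannian $G_n$ of $n$-dimensional subspaces of $H$, so $\phi$ transfers to a continuous bijection $\psi$ of $G_n$ with $U\overset{a}{\sim}W\iff\dim(U\cap W)=n-1\iff\dim(U+W)=n+1$; from now on I would work with $\psi$.

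The first step is to read off the maximal cliques of the adjacency graph. A short linear-algebra computation on pairwise intersections shows that these are exactly the \emph{stars} $\mathcal{S}_M=\{U\in G_n:M\subseteq U\}$ with $\dim M=n-1$ and the \emph{tops} $\mathcal{T}_N=\{U\in G_n:U\subseteq N\}$ with $\dim N=n+1$. As an adjacency isomorphism, $\psi$ permutes the set of maximal cliques. The crucial point is a dichotomy: either $\psi$ sends every star to a star (and hence, by applying the same reasoning to $\psi^{-1}$, every top to a top), or $\psi$ sends every star to a top and every top to a star. When $\dim H=2n$ a star $\mathcal{S}_M\cong\mathbb{P}(H/M)$ and a top $\mathcal{T}_N\cong\mathbb{P}(N^{*})$ are both $n$-dimensional complex projective spaces, so the two families cannot be separated by size; instead I would use that two distinct stars, and likewise two distinct tops, meet in at most one point, whereas a star and a top that meet do so in a full \emph{pencil} $\{U:M\subseteq U\subseteq N\}$ with $M\subseteq N$, $\dim M=n-1$, $\dim N=n+1$, i.e.\ in an entire projective line's worth of $n$-subspaces. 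Combining these intersection patterns with the connectivity of the star-to-star adjacency graph forces the all-or-nothing behaviour. In the interchanging case I would replace $\psi$ by $U\mapsto\psi(U)^{\perp}$: since $U\mapsto U^{\perp}$ is a continuous, adjacency-preserving bijection of $G_n$ that swaps stars and tops, this reduces us to the case where $\psi$ preserves both families, the orthocomplement to be restored in the conclusion as \eqref{perpregularChow}.

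So suppose $\psi$ carries stars to stars and tops to tops. Then it induces bijections on the sets of $(n-1)$- and of $(n+1)$-subspaces, hence carries pencils to pencils and is an isomorphism of the point--line geometry of the Grassmann space. To extract a semilinear map I would localise: restricting $\psi$ to a single star $\mathcal{S}_M\cong\mathbb{P}(H/M)$, the pencils lying inside $\mathcal{S}_M$ are precisely the projective lines of this $n$-dimensional projective space, so $\psi|_{\mathcal{S}_M}$ is a collineation onto $\mathcal{S}_{M'}\cong\mathbb{P}(H/M')$ with $n\ge2$ and is therefore induced by a semilinear isomorphism $g_M\colon H/M\to H/M'$, by the fundamental theorem of projective geometry. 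I would then check that the field automorphisms of $\C$ attached to overlapping stars coincide and that the maps $g_M$ are mutually compatible, and glue them into a single semilinear bijection $A\colon H\to H$ with $\psi(U)=A(U)$ for all $U\in G_n$ --- this is the Chow--Dieudonn\'e form of the fundamental theorem of Grassmann geometry. Finally, continuity of $\psi$ makes the accompanying automorphism $\sigma$ of $\C$ continuous, so $\sigma=\mathrm{id}$ or $\sigma$ is complex conjugation; thus $A$ is linear or conjugatelinear, and undoing the possible orthocomplement from the dichotomy step yields exactly \eqref{regularChow} or \eqref{perpregularChow}.

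I expect two steps to carry the real weight. The first is the dichotomy: ruling out mixed behaviour of $\psi$ must be done with the intersection combinatorics alone and needs to be organised carefully. The second, and the genuinely technical part, is the gluing --- each star a priori furnishes its own semilinear map with its own field automorphism, and one must verify these are globally compatible before assembling $A$; this is where most of the bookkeeping lives. Continuity, by contrast, does no work in the combinatorial core (over $\R$ it would be redundant) and is needed only to exclude wild automorphisms of $\C$ at the end.
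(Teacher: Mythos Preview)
The paper does not prove this statement at all: Theorem~\ref{Chow} is quoted as a special case of Chow's fundamental theorem of Grassmann geometry, with references to \cite{Chow} and \cite{GSUhlhorn}, and is then used as a black box in the proof of the Main Theorem. So there is no ``paper's own proof'' to compare your proposal against.

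That said, your outline is a correct and standard sketch of how Chow's theorem is actually proved. The identification of maximal adjacency cliques with stars $\mathcal{S}_M$ and tops $\mathcal{T}_N$, the dichotomy via the intersection pattern (two cliques of the same type meet in at most one point, while a star and a top that meet do so in a full pencil), the reduction of the star--top swapping case via $U\mapsto U^{\perp}$, the local application of the fundamental theorem of projective geometry on each star, and the final use of continuity to force the field automorphism to be the identity or conjugation --- all of this is right and matches the classical argument. Your own assessment of where the work lies is also accurate: the gluing of the local semilinear maps into a single global $A$ is the genuinely technical step, and the dichotomy requires the connectivity observation you mention (any two $(n{-}1)$-subspaces are joined by a chain of $(n{-}1)$-subspaces whose consecutive sums have dimension $n$). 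None of this is needed for the present paper, which simply invokes the result.
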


In the general version of Chow's theorem continuity is not assumed, however, then $A$ can be a non-continuous semilinear bijection as well.
That version also covers the $2n < \dim H < \infty$ case where the conclusion \eqref{perpregularChow} is of course excluded.

Next, we introduce some technical notions.
Let us call $P$ and $Q \in P_n(H)$ \emph{orthogonal adjacent} if $P\overset{a}\sim Q$ and $\vartheta_1 = \tfrac{\pi}{2}$, in notation $P\overset{\perp a}{\sim} Q$.
Similarly, $P,Q \in P_n(H)$ are said to be \emph{non-orthogonal adjacent} if $P\overset{a}\sim Q$ and $\vartheta_1 < \tfrac{\pi}{2}$, in notation $P\overset{\not\perp a}{\sim} Q$.
For any $k\in\N$, subspace $M$, and $P,Q \in P_k(M)$ we define the set
$$
	\irA^{(k)}_{P,Q} = \left\{R\in P_k(M)\colon P+Q-R\in P_k(M)\right\}.
$$
We will show that $\varphi$ preserves non-orthogonal adjacency in both directions in which the following topological characterisation of the relation $\overset{\not\perp a}{\sim}$ plays a crucial role.

\begin{lem}\label{manifold}
	Suppose that $P,Q \in P_n(H)$. Then $\irA^{(n)}_{P,Q}$ is a one-dimensional (real) manifold if and only if $P\overset{\not\perp a}{\sim} Q$.
\end{lem}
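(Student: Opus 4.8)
The plan is to analyze the set $\irA^{(n)}_{P,Q}$ explicitly by reducing to the restriction of $P$ and $Q$ to the subspace $\im P + \im Q$ and then examining cases according to $\measuredangle(P,Q)$. First I would observe that the condition $P+Q-R\in P_n(H)$ forces $\im R \subseteq \im P + \im Q$ (since $P+Q-R\geq 0$ would fail otherwise — more precisely, a vector in $(\im P+\im Q)^\perp$ on which $R$ acts nontrivially makes $P+Q-R$ have a negative eigenvalue), and similarly $\ker R \supseteq (\im P \cap \im Q)^{\perp}$ intersected appropriately; so the whole question localizes to the space $M := \im P + \im Q$, whose dimension is $n+k$ where $k$ is the number of nonzero principal angles. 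Then I would use the two projections theorem to put $P$ and $Q$ simultaneously into a normal form: $M$ decomposes into the common part $\im P\cap\im Q$ (dimension $n-k$), on which both act as the identity, a piece where both act as zero (irrelevant here since we are inside $M$), and $k$ mutually orthogonal two-dimensional blocks on each of which $P$ and $Q$ are rank-one projections at angle $\vartheta_j$.

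Next I would reduce to understanding $\irA^{(1)}_{p,q}$ for a pair of rank-one projections $p,q$ on a two-dimensional space at a fixed angle, because on the common part $\im P\cap\im Q$ the constraint forces $R$ to contain that subspace (or rather forces the corresponding block of $R$ to be the identity), and the blocks decouple. In the $2$-dimensional rank-one situation one computes directly: writing rank-one projections via unit vectors, $\irA^{(1)}_{p,q}=\{r : p+q-r \text{ is a rank-one projection}\}$; the trace condition $\tr(p+q-r)=1$ fixes $\tr r = \tr p + \tr q - 1 = 1$ automatically, and requiring $p+q-r$ to be idempotent of rank one cuts out a curve. When $p\neq q$ and they are non-orthogonal ($0<\vartheta<\pi/2$), this curve is a genuine one-dimensional manifold (a circle, essentially the set of rank-one projections $r$ such that $\{r, p+q-r\}$ has the right trace and commutation structure); when $p\perp q$ the set degenerates (one finds $p+q-r$ forced, giving $r\in\{p,q\}$ or similarly a zero-dimensional set); and when $p=q$ one gets the full line of projections along the pencil, which is again one-dimensional but corresponds to $P=Q$, a case excluded once we intersect with the requirement that $\irA^{(n)}_{P,Q}$ be a manifold and not something of the wrong dimension. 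Assembling the blocks: if $\measuredangle(P,Q)$ has $k\geq 2$ nonzero angles, $\irA^{(n)}_{P,Q}$ is (locally) a product of $k$ curves, hence $k$-dimensional, not one-dimensional; if $k=1$ and that angle is $<\pi/2$, i.e.\ $P\overset{\not\perp a}{\sim}Q$, we get exactly a one-dimensional manifold; if $k=1$ and the angle is $\pi/2$, i.e.\ $P\overset{\perp a}{\sim}Q$, the single block degenerates and $\irA^{(n)}_{P,Q}$ fails to be a one-manifold (it is a finite set or has singularities); and if $k=0$, i.e.\ $P=Q$, the set is the whole Grassmannian $P_n(\im P)$ restricted suitably, which has dimension $\neq 1$ when $\dim H>n$. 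This case analysis yields the claimed equivalence.

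The main obstacle I anticipate is the careful verification in the degenerate cases — especially pinning down the precise structure of $\irA^{(n)}_{P,Q}$ when $P \overset{\perp a}{\sim} Q$, where one must show it genuinely fails to be a smooth one-dimensional manifold (either it is lower-dimensional, or it has a singular point where two branches cross, or it contains components of differing dimension) rather than merely having a different description. Handling the interplay between the common part $\im P \cap \im Q$ and the active blocks also requires care: one must confirm that the constraint $P+Q-R \in P_n(H)$ really does force $R$ to restrict to the identity on $\im P\cap\im Q$ and to vanish off $M$, so that the problem truly decouples into the two-dimensional blocks. Once the decoupling is established rigorously, the smooth-manifold count in the one-block non-orthogonal case should follow from an explicit parametrization (e.g.\ by an angle parameter tracing a circle of intermediate rank-one projections), and the failures in all other cases from dimension count or an explicit singularity.
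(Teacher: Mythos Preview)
Your overall strategy matches the paper's---localize to $\im P+\im Q$, decompose via the two-projections theorem, and reduce the adjacent case to a rank-one computation on a two-dimensional block---but your analysis of the degenerate cases contains concrete errors that would derail the proof.

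\textbf{Orthogonal adjacent case.} You claim that when $p\perp q$ in $P_1(\C^2)$ the set $\irA^{(1)}_{p,q}$ ``degenerates\dots\ to a zero-dimensional set''. This is false: if $p\perp q$ then $p+q=I_2$, so for \emph{every} $r\in P_1(\C^2)$ the operator $p+q-r=I_2-r$ is the complementary rank-one projection. Hence $\irA^{(1)}_{p,q}=P_1(\C^2)\cong S^2$, a smooth \emph{two}-dimensional manifold. This is exactly the mechanism by which the paper rules out $P\overset{\perp a}{\sim}Q$: not by a singularity or a drop in dimension, but by a jump \emph{up} to dimension two. Your anticipated obstacle (``lower-dimensional, or a singular point, or components of differing dimension'') points in the wrong direction.

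\textbf{The case $P=Q$.} You say the single-block set with $p=q$ is ``the full line of projections along the pencil, again one-dimensional'', and that for general $n$ one gets ``the whole Grassmannian $P_n(\im P)$''. In fact $\irA^{(1)}_{p,p}=\{p\}$ (if $2p-r$ is a projection then squaring forces $\tr(pr)=1$, hence $r=p$), and likewise $\irA^{(n)}_{P,P}=\{P\}$, a single point.

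\textbf{Non-adjacent case.} Your claim that ``the blocks decouple'' so that $\irA^{(n)}_{P,Q}$ is a product of $k$ curves is not justified: an $R\in\irA^{(n)}_{P,Q}$ need not be block-diagonal in the two-projections decomposition of $P$ and $Q$. The paper avoids this issue entirely by observing only the \emph{containment}
\[
\Bigl\{\mathrm{diag}(\mathfrak{t}_1,\dots,\mathfrak{t}_n):\ \mathfrak{t}_j\in\irA^{(1)}_{\mathfrak{p}_j,\mathfrak{q}_j}\Bigr\}\subset\irA^{(n)}_{P,Q},
\]
and noting that the left side already has dimension $\geq 2$ (since at least two factors are nontrivial and each is at least one-dimensional by the rank-one analysis), which precludes $\irA^{(n)}_{P,Q}$ from being a one-manifold. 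You should weaken your decoupling claim to this containment; that is all that is needed.
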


\begin{proof}
	Clearly, we have $\irA^{(n)}_{P,P} = \{P\}$, therefore from now on we may assume that $P\neq Q$.
	Let us first investigate the case when $P\overset{a}{\sim} Q$. 
	Then $P$ and $Q$ can be represented by the following block-matrices with respect to the orthogonal decomposition $H = M_1\oplus M_2 \oplus M_3$ where $M_1 = \im P\cap\im Q$, $M_1 \oplus M_2 = \im P + \im Q$, $\dim M_1 = \dim M_3 = n-1$, $\dim M_2 = 2$ and $\mathfrak{p}, \mathfrak{q} \in P_1(M_2)$:
	$$
		P = 
		\left(
			\begin{matrix}
				I_{n-1} & 0 & 0\\
				0 & \mathfrak{p} & 0\\
				0 & 0 & 0_{n-1}
			\end{matrix}
		\right)
		\quad \text{and} \quad 
		Q = 
		\left(
			\begin{matrix}
				I_{n-1} & 0 & 0\\
				0 & \mathfrak{q} & 0\\
				0 & 0 & 0_{n-1}
			\end{matrix}
		\right).
	$$
	Suppose that $R\in \irA^{(n)}_{P,Q}$ and set $S = P+Q-R\in P_n(H)$.
	Since we have
	$$
		\|Rx\|^2 + \|Sx\|^2 = \langle (R+S)x, x\rangle = \langle (P+Q)x, x\rangle = 2\|x\|^2 \quad (x\in M_1)
	$$
	and
	$$
		\|Rx\|^2 + \|Sx\|^2 = \langle (R+S)x, x\rangle = \langle (P+Q)x, x\rangle = 0 \quad (x\in M_3),
	$$
	we immediately infer $M_1 \subseteq \im R \cap \im S$ and $M_3 \subseteq \ker R \cap \ker S$. 
	Thus the block-matrix representations of $R$ and $S$ in the decomposition $H = M_1\oplus M_2 \oplus M_3$ are
	$$
		R = 
		\left(
			\begin{matrix}
				I_{n-1} & 0 & 0\\
				0 & \mathfrak{r} & 0\\
				0 & 0 & 0_{n-1}
			\end{matrix}
		\right)
		\quad \text{and} \quad 
		S = 
		\left(
			\begin{matrix}
				I_{n-1} & 0 & 0\\
				0 & \mathfrak{s} & 0\\
				0 & 0 & 0_{n-1}
			\end{matrix}
		\right),
	$$
	where $\mathfrak{r}, \mathfrak{s} \in P_1(M_2)$, whence we easily conclude the following:
	$$
		\irA^{(n)}_{P,Q} = \left\{ \left(
		\begin{matrix}
		I_{n-1} & 0 & 0\\
		0 & \mathfrak{t} & 0\\
		0 & 0 & 0_{n-1}
		\end{matrix}
		\right) \colon \mathfrak{t} \in \irA^{(1)}_{\mathfrak{p},\mathfrak{q}} \right\}.
	$$
	In particular, $\irA^{(n)}_{P,Q}$ and $\irA^{(1)}_{\mathfrak{p},\mathfrak{q}}$ are homeomorphic.
	
	Next, we investigate the set $\irA^{(1)}_{\mathfrak{p},\mathfrak{q}}$, where $\mathfrak{p} \neq \mathfrak{q}$.
	We shall represent elements of $F_S(M_2)$ by 2$\times$2 Hermitian matrices.
	If $\mathfrak{p} + \mathfrak{q} = I_2$, i.e. $P\overset{\perp a}{\sim}Q$, then obviously $\irA^{(1)}_{\mathfrak{p},\mathfrak{q}} = P_1(M_2)$, hence $\irA^{(n)}_{P,Q}$ is a two-dimensional manifold.
	Suppose that $\mathfrak{p} + \mathfrak{q} \neq I_2$, i.e. $P\overset{\not\perp a}{\sim}Q$, then applying unitary similarity we may assume without loss of generality that 
	$\mathfrak{p} + \mathfrak{q} = 
	\left(
		\begin{matrix}
		s & 0\\
		0 & 2-s
		\end{matrix}
	\right)$ where $0<s<1$.
	Since for any $\mathfrak{r} \in P_1(M_2)$ we have $\tr(\mathfrak{p} + \mathfrak{q} - \mathfrak{r}) = 1$, we infer that $\mathfrak{r} \in \irA^{(1)}_{\mathfrak{p},\mathfrak{q}}$ if and only if $\mathfrak{p} + \mathfrak{q} - \mathfrak{r}$ is singular.
	But this holds exactly when $I_2 - (\mathfrak{p} + \mathfrak{q})^{-1} \mathfrak{r}$ is singular, that is equivalent to $\tr (\mathfrak{p} + \mathfrak{q})^{-1} \mathfrak{r} = 1$, since $(\mathfrak{p} + \mathfrak{q})^{-1} \mathfrak{r}$ is of rank one.
	Therefore an Hermitian $2\times 2$ matrix $A = \left(
	\begin{matrix}
	a_{11} & a_{12}\\
	a_{21} & a_{22}
	\end{matrix}
	\right)$ belongs to $\irA^{(1)}_{\mathfrak{p},\mathfrak{q}}$ if and only if $\tr A = a_{11} + a_{22} = 1$, $\tr (\mathfrak{p} + \mathfrak{q})^{-1} A = \tfrac{a_{11}}{s} + \tfrac{a_{22}}{2-s} = 1$ and $A$ is of rank 1.
	Observe that the two equations immediately yield $a_{11} = \tfrac{s}{2}$ and $a_{22} = \tfrac{2-s}{2}$, and since $A$ has rank one, we also obtain $a_{12} = \overline{a_{21}} = \tfrac{\sqrt{(2-s)s}}{2} e^{it}$ with a real number $t$.
	This implies that $\irA^{(1)}_{\mathfrak{p},\mathfrak{q}}$ is a one-dimensional manifold, and therefore so is $\irA^{(n)}_{P,Q}$.
	
	Finally, let us suppose that $P\neq Q$ and $P \not\overset{a}{\sim} Q$.
	Then there is an orthogonal decomposition $H = H_1\oplus \dots \oplus H_n$ such that $\dim H_j = 2$ for every $j$ and that we have the following block-diagonal representations where $\mathfrak{p}_j, \mathfrak{q}_j \in P_1(M_j)$ $(j = 1,\dots n)$:
	$$
		P = 
		\left(
			\begin{matrix}
				\mathfrak{p}_1 & 0 & \dots & 0\\
				0 & \mathfrak{p}_2 & \dots & 0\\
				\vdots & \vdots & \ddots & \vdots\\
				0 & 0 & \dots & \mathfrak{p}_n
			\end{matrix}
		\right)
		\quad \text{and} \quad 
		Q = 
		\left(
			\begin{matrix}
				\mathfrak{q}_1 & 0 & \dots & 0\\
				0 & \mathfrak{q}_2 & \dots & 0\\
				\vdots & \vdots & \ddots & \vdots\\
				0 & 0 & \dots & \mathfrak{q}_n
			\end{matrix}
		\right).
	$$
	Observe that $\mathfrak{p}_j \neq \mathfrak{q}_j$ holds for at least two indices and that we obviously have
	$$
		\left\{
			\left(
				\begin{matrix}
					\mathfrak{t}_1 & 0 & \dots & 0\\
					0 & \mathfrak{t}_2 & \dots & 0\\
					\vdots & \vdots & \ddots & \vdots\\
					0 & 0 & \dots & \mathfrak{t}_n
				\end{matrix}
			\right)
			\colon
			\mathfrak{t}_j \in \irA^{(1)}_{\mathfrak{p}_j,\mathfrak{q}_j}
		\right\}
		\subset \irA^{(n)}_{P,Q}.
	$$
	Since the left-hand side is a manifold of dimension at least two, the right-hand side cannot be a one-dimensional manifold, which completes the proof.
\end{proof}

Utilising Moln\'ar's lemma we easily obtain the following property:
$$
	\varphi\big(\irA^{(n)}_{P,Q}\big) = \Phi\big(\irA^{(n)}_{P,Q}\big) = \irA^{(n)}_{\Phi(P),\Phi(Q)} = \irA^{(n)}_{\varphi(P),\varphi(Q)} \quad (P,Q \in P_n(H)).
$$
Since $\varphi$ is a homeomorphism, we infer the following equivalence-chain:
$$
	P\overset{\not\perp a}{\sim} Q \iff \irA^{(n)}_{P,Q} \text{ is a one-dimensional manifold}
$$
$$
	\iff \irA^{(n)}_{\varphi(P),\varphi(Q)} \text{ is a one-dimensional manifold } \iff \varphi(P)\overset{\not\perp a}{\sim} \varphi(Q),
$$
i.e.~$\varphi$ preserves non-orthogonal adjacency in both directions.
The lower semicontinuity of the $\mathrm{rank}$ on $F_s(H)$ yields the following for every $P\in P_n(H)$:
$$
\big\{R\in P_n(H) \colon P\overset{\not\perp a}{\sim} R\big\}^- = \{P\}\cup\big\{R\in P_n(H) \colon P\overset{a}{\sim} R\big\}
$$
where $\cdot^-$ denotes the closure. 
Therefore, since $\varphi$ is a homeomorphism, it preserves adjacency in both directions, which implies that $\varphi$ satisfies either \eqref{regularChow} or \eqref{perpregularChow}.
Finally, by \eqref{trpres} the map $A$ preserves orthogonality, and thus $A$ must be a scalar multiple of a unitary or an antiunitary operator which completes the proof of the present case.

Let us make an important observation here.
Clearly, every rank-one projection $\mathfrak{p}\in P_1(H)$ can be expressed as a real-linear combination of $n+1$ rank-$n$ projections 
(\cite[Lemma 2.1.5]{Molnarbook}), moreover, if this linear combination is $\mathfrak{p} = \sum_{j=1}^{n+1} t_j P_j$, then taking the trace of both sides gives $\sum_{j=1}^{n+1} t_j = \tfrac{1}{n}$.
Therefore, in case of \eqref{perpregular} we have
$$
\Phi(\mathfrak{p}) = \sum_{j=1}^{n+1} t_j \Phi(P_j) = V\left(\sum_{j=1}^{n+1} t_j (I_{2n}-P_j)\right)V^* = \tfrac{1}{n}I_{2n} - V\mathfrak{p}V^* \qquad (\mathfrak{p}\in P_1(H)),
$$
and similarly, in case of \eqref{regular} we obtain $\Phi(\mathfrak{p}) = V\mathfrak{p}V^*$ for every $\mathfrak{p}\in P_1(H)$.

\subsection{The general case}
By the following three properties it is apparent that the case of $\dim H < 2n$ follows from the $\dim H > 2n$ case: $P\in P_n(H)$ holds if and only if $I-P \in P_{\dim H - n}(H)$, we have $\tr (I-P)(I-Q) = \dim H - 2n + \tr PQ$ for every $P,Q \in P_n(H)$, and the following map preserves the transition probability:
$$
\psi \colon P_{\dim H - n}(H) \to P_{\dim H - n}(H), \; \psi(\tilde P) = I - \varphi(I-\tilde P) \quad (\tilde P \in P_{\dim H - n}(H)).
$$

Next, assume that $\dim H > 2n$ and fix two orthogonal rank-$n$ projections $P$ and $Q$.
By \eqref{trpres} we obtain that $\varphi(P)$ and $\varphi(Q)$ are orthogonal as well, and since for any $R\in P_n(H)$ we have $R\leq P+Q$ if and only if $R\in\irA^{(n)}_{P,Q}$, we easily conclude $\varphi(R)\leq \varphi(P)+\varphi(Q)$.
By the observation following the $2n$-dimensional case we get either $\Phi(\mathfrak{p}) \in P_1(H)$ ($\mathfrak{p}\in P_1(H)$, $\mathfrak{p}\leq P+Q$), or $\im\Phi(\mathfrak{p}) = \im\varphi(P)\oplus\im\varphi(Q)$  ($\mathfrak{p}\in P_1(H)$, $\mathfrak{p}\leq P+Q$).
Assume for a moment that the second possibility holds. 
If we replace in the above method $Q$ by another $Q'\in P_n(H)$ that is still orthogonal to $P$, then we easily obtain $\im\Phi(\mathfrak{p}) = \im\varphi(P)\oplus\im\varphi(Q')$  ($\mathfrak{p}\in P_1(H)$, $\mathfrak{p}\leq P+Q'$), since we obviously cannot have $\Phi(\mathfrak{p}) \in P_1(H)$ for any $\mathfrak{p}\in P_1(H), \mathfrak{p}\leq P$.
In particular, we obtain $\Phi(P_1(H)) \cap P_1(H) = \emptyset$, whence we infer that $\im\varphi(P)\oplus\im\varphi(Q)$ must be the same subspace for every orthogonal pair $P$ and $Q$, from which we conclude $\im \Phi(A) \subseteq \im\varphi(P)\oplus\im\varphi(Q)$ ($A\in F_s(H)$) that contradicts to the injectivity of $\Phi$.
Therefore we must have $\Phi(P_1(H)) \subset P_1(H)$, and finally, \eqref{extended_trpres}, Theorem \ref{W} and the linearity of $\Phi$ imply \eqref{regular}.


\section*{Acknowledgements}
The author is very grateful to the anonymous referee for several useful suggestions which definitely improved the quality of the paper.
This work was supported by the Engineering and Physical Sciences Research Council grant EP/M024784/1, by the Hungarian National Research, Development and Innovation Office -- NKFIH (grant no.~K115383), and by the "Lend\" ulet" Program (LP2012-46/2012) of the Hungarian Academy of Sciences.


\bibliographystyle{amsplain}

\end{document}